\documentclass[11pt,a4paper]{amsart}

\usepackage[toc,page]{appendix}
\usepackage{lscape}
\usepackage{hyperref} 
\usepackage{multirow}



\usepackage{amsmath,amsthm, amscd, amssymb, amsfonts}
\usepackage{epsfig}
\usepackage{amsmath,amsthm,amssymb, amscd,enumerate}

\numberwithin{equation}{section}

\theoremstyle{plain}
\newtheorem{theorem} {Theorem} [section]
\newtheorem*{theoremSN} {Theorem}

\newtheorem{proposition} [theorem] {Proposition}
\newtheorem*{propositionSN} {Proposition}

\theoremstyle{definition}

\newtheorem{example} [theorem] {Example}

\renewcommand \parallel {/\kern-3pt/}

\newcommand \g {\mathfrak{g}}

\newcommand \h {\mathfrak{h}}

\renewcommand \k {\mathrm{k}}

\newcommand \End {\operatorname{End}}

\newcommand \im {\operatorname{Im}}


  {\centering\normalfont\Large\bfseries}


\begin{document}


\title{Minimal Faithful Representation of the Heisenberg Lie Algebra with Abelian Factor}

\title[Minimal Faithful Representations]{Minimal Faithful Representation of the Heisenberg Lie Algebra with Abelian Factor}
\author{Nadina Elizabeth Rojas}

\address{Current affiliation: FCEFyN Universidad Nacional de C\'ordoba, \newline \indent Ciudad Universitaria, \newline \indent
(5000) C\'ordoba, \newline \indent Argentina}

\email{nrojas@efn.uncor.edu}

\thanks{Fully supported by FCEFyN (Argentina)}

\subjclass[2010]{17B10; 17B30; 20C40.}

\keywords{Nilpotent Lie Algebras, Heisenberg Lie Algebra, Ado's Theorem, Minimal Faithful Representation, Nilrepresentation}

\begin{abstract}
For a finite dimensional Lie algebra
$\g$ over a field
$\k$ of characteristic zero, the
$\mu$-function (respectively
$\mu_{nil}$-function) is defined to be the minimal dimension of
$V$ such that
$\g$ admits a faithful representation (respectively a faithful nilrepresentation) on
$V$. Let
$\h_m$ be the Heisenberg Lie algebra of dimension
$2m + 1$ and let
$\mathfrak{a}_n$ be the abelian Lie algebra of dimension
$n$. The aim of this paper is to compute
$\mu(\h_m \oplus \mathfrak{a}_n)$ and
$\mu_{nil}(\h_m \oplus \mathfrak{a}_n)$ for all
$m,n \in \mathbb{N}$.
\end{abstract}

\maketitle


\section{Introduction}\label{intro}


In this paper, all Lie algebras and representations are finite dimension over a field
$\k$ of characteristic zero. Given a Lie algebra
$\g$, we denote by
$\mu(\g)$ the minimal dimension of a faithful representation and by
$\mu_{nil}(\g)$ the minimal dimension of a faithful nilrepresentation. By Ado's Theorem (or its proof) these numbers are well defined and they are integers  invariants of
$\g$ (see for instance \cite[page 202]{J}). Clearly,
$\mu(\g) \leq \mu_{nil}(\g)$.

In the theory of the affine crystallographic groups and finitely-generated torsion-free nilpotent groups, the invariant
$\mu$ plays an important role (for details and reference see \cite{B2}). The following results are known:

\begin{propositionSN}\cite[Propositions 2.31 and 3.8]{B2}
Let
$G$ be an Lie group of dimension
$n$ with
$\g = \text{Lie}(G)$ the Lie algebra of
$G$. If
$G$ admits a left-invariant affine structure then
$\mu(\g) \leq n + 1$.
\end{propositionSN}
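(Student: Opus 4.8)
The plan is to invoke the standard dictionary between left-invariant affine structures on $G$ and left-symmetric (pre-Lie) algebra structures on $\g$. First I would recall that a left-invariant affine structure on $G$ amounts to a bilinear product $\cdot$ on $\g$ satisfying the left-symmetry identity $(x\cdot y)\cdot z - x\cdot(y\cdot z) = (y\cdot x)\cdot z - y\cdot(x\cdot z)$ together with the compatibility condition $x\cdot y - y\cdot x = [x,y]$; geometrically these are exactly the flatness and torsion-freeness of the associated left-invariant connection. Introducing the left multiplications $L_x \in \gl(\g)$ defined by $L_x(y) = x\cdot y$, the left-symmetry identity is precisely the statement that $[L_x, L_y] = L_{[x,y]}$, so that $x \mapsto L_x$ is a representation of $\g$ on the $n$-dimensional space $\g$. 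This representation need not be faithful, however, so it does not by itself bound $\mu(\g)$.

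The key step is to pass to the associated affine representation $\rho : \g \to \gl(\g)\ltimes\g$ given by $\rho(x) = (L_x, x)$, where the second coordinate records the translation part. Using the bracket $[(A,u),(B,v)] = ([A,B], Av - Bu)$ of the affine Lie algebra, a short computation shows that the first coordinate of $[\rho(x),\rho(y)]$ equals $[L_x,L_y] = L_{[x,y]}$ by left-symmetry, while the second coordinate equals $L_x y - L_y x = x\cdot y - y\cdot x = [x,y]$ by the compatibility condition; hence $\rho$ is a Lie algebra homomorphism. The crucial point is that $\rho$ is faithful: if $\rho(x) = 0$ then in particular its translation part $x$ vanishes, so $\ker\rho = 0$.

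Finally I would realise $\gl(\g)\ltimes\g$ concretely inside $\gl(\g\oplus\k) \cong \gl_{n+1}(\k)$ by sending $(A,v)$ to the block matrix $\bigl(\begin{smallmatrix} A & v \\ 0 & 0 \end{smallmatrix}\bigr)$, the block bracket reproducing $([A,B], Av - Bu)$, so that this is a Lie algebra embedding. Composing with $\rho$ yields a faithful representation of $\g$ on the $(n+1)$-dimensional space $\g\oplus\k$, which gives $\mu(\g) \le n+1$.

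I expect the main obstacle to be the initial equivalence between left-invariant affine structures on $G$ and pre-Lie products on $\g$: translating the vanishing of torsion and curvature of a left-invariant connection into the two algebraic identities above is the substantive content, whereas the subsequent construction of $\rho$ and its matrix realisation are routine verifications.
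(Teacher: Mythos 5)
Your proposal is correct, and it is precisely the standard argument from the cited source \cite{B2} (the paper itself states this proposition without proof, as a quotation of Burde's Propositions 2.31 and 3.8): the left-symmetric product encoding the affine structure, the faithful affine representation $\rho(x) = (L_x, x)$, and the block-matrix embedding of $\gl(\g)\ltimes\g$ into $\gl_{n+1}(\k)$ are exactly the steps used there. All your verifications check out, including the key observation that faithfulness comes for free from the translation part even though $x \mapsto L_x$ alone may have a kernel.
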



Determining left-invariant affine structures over a Lie group
$G$ is linked to the existence left symmetric structures on the corresponding Lie algebra
$\g$. Milnor stated that all real solvable Lie algebra would
admit a left symmetric structure. There are many articles with positive results in this direction (see
for instance the original paper of Milnor \cite{Mi} and \cite{A, D, GM}). However, Y. Benoist \cite{BY} and Burde and
Grunewald \cite{BG} gave the first examples of nilpotent Lie groups not admitting any left-invariant affine structure,
they constructed nilpotent Lie algebras
$\g$ such that
$\mu(\g) > \dim \g + 1$.

On the other hand, this problem is also related to the representation theory of finitely generated nilpotent groups,
since it is important to have methods for finding integer matrix representations of
small dimension for this class of groups (see \cite{GN, N}).

Computing
$\mu \text{ and } \mu_{nil}$ (or finding bounds for them) for a given Lie algebra is acknowledged
to be a very difficult task and the goal has been achieved for very few families (see for instance \cite{S, BM, CR, KB}).

Let
$\g$ be a Lie algebra. Assume that the center
$\mathfrak{z}(\g)$ is trivial then the adjoint representation is faithful, it follows that
$\mu(\g) \leq \dim \g$. If
$\g$ is
$k$-step nilpotent whit
$k= 2 \text{ or } 3$ then
\begin{equation}\label{eq:Sc}
\mu(\g) \leq \dim \g +1,
\end{equation}
see \cite{Sc}. In general, if
$\g$ is nilpotent this result is not even true. In this case, it is known that
\begin{equation}\label{eq:Burde}
\mu(\g) < \frac{3}{\sqrt{\dim \g}} 2^{\dim \g},
\end{equation}
see \cite{B}.
Since the classification of representations of nilpotent Lie algebras is a \emph{wild} problem,
it reasonable to expect difficulties in obtaining
$\mu(\g)$.

Let
$\mathfrak{a}_n$ be the abelian Lie algebra of dimension
$n$ and let
$\h_m$ be the Heisenberg Lie algebra of dimension
$2m + 1$ with a basis
$\{X_1, \dots, X_m, Y_1, \dots, Y_m, Z\}$ such that the only non-zero brackets are
$$
[X_i, Y_i] = Z.
$$
It is clear that the center of
$\h_m$ is
$\mathfrak{z}(\h_m) = \k \{Z\}$. The following results are know for
$\h_m \text{ and } \mathfrak{a}_n$.

\begin{theorem}\label{h_mAbelian}
Let
$m, n \in \mathbb{N}$
\begin{enumerate}[(1)]
\item \label{h_m}
      $\mu_{nil}(\h_m) = \mu(\h_m) = m + 2$ (see \cite{B});
\item \label{abelian}
      $\mu(\mathfrak{a}_n) = \left\lceil 2 \sqrt{n - 1}\right\rceil$ and
      $\mu_{nil}(\mathfrak{a}_n) = \left\lceil 2 \sqrt{n}\right\rceil$ (see \cite{S}, \cite{J2}, \cite{MM}).
\end{enumerate}
\end{theorem}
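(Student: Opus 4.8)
The plan is to prove each part by matching an explicit construction against a structural lower bound. \emph{Part (1), upper bound.} I would first write down an explicit faithful nilrepresentation of dimension $m+2$. Working inside the strictly upper triangular $(m+2)\times(m+2)$ matrices, send
\[
X_i \longmapsto E_{1,\,i+1}, \qquad Y_i \longmapsto E_{i+1,\,m+2}, \qquad Z \longmapsto E_{1,\,m+2}.
\]
Multiplying matrix units gives $[X_i,Y_j]=\delta_{ij}Z$ with all remaining brackets zero, so this is a Lie algebra homomorphism; its images are linearly independent and nilpotent, hence this is a faithful nilrepresentation and $\mu_{nil}(\h_m)\le m+2$. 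Since $\mu(\h_m)\le\mu_{nil}(\h_m)$, everything reduces to proving $\mu_{nil}(\h_m)\ge m+2$ together with a comparison that lets $\mu$ inherit it.

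\emph{Part (1), reduction of $\mu$ to $\mu_{nil}$.} Let $\rho\colon\h_m\to\gl(V)$ be faithful and put $N=\rho(Z)$. Over $\overline{\k}$, decompose $V$ into generalized weight spaces $V^\lambda$ for the nilpotent algebra $\h_m$; each is a subrepresentation and every weight $\lambda$ vanishes on $[\h_m,\h_m]=\mathfrak{z}(\h_m)=\k Z$. Thus on each block $\rho(g)=\lambda(g)\id+n_g^\lambda$ with nilpotent part $n_g^\lambda$, and $g\mapsto n_g^\lambda$ is a nilrepresentation $\rho_\lambda$ with $\rho_\lambda(Z)=N|_{V^\lambda}$ (as $\lambda(Z)=0$); in particular $N$ is nilpotent. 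Since $N\neq0$, some block has $\rho_\lambda(Z)\neq0$, and because any ideal $\mathfrak{k}$ of $\h_m$ with $Z\notin\mathfrak{k}$ must vanish (from $[\h_m,\mathfrak{k}]\subseteq\mathfrak{k}\cap\k Z$ one gets $\mathfrak{k}\subseteq\mathfrak{z}(\h_m)=\k Z$, whence $\mathfrak{k}=0$), that $\rho_\lambda$ is automatically faithful. Therefore $\dim V\ge\dim V^\lambda\ge\mu_{nil}(\h_m)$, which gives $\mu(\h_m)=\mu_{nil}(\h_m)$.

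\emph{Part (1), the nilrepresentation bound (the crux).} For a faithful nilrepresentation I would simultaneously strictly upper triangularize $\rho(\h_m)$ along a flag $V_\bullet$ (Engel), fix $v$ and $\phi$ with $\phi(Nv)=1$, and on $S=\operatorname{span}(X_i,Y_i)$ consider $P(u)=\rho(u)v$ and $Q(u)=\phi\circ\rho(u)$. The identity $\langle Q(u),P(w)\rangle-\langle Q(w),P(u)\rangle=\phi([\rho(u),\rho(w)]v)$ recovers the standard symplectic form on $S$, which is nondegenerate, so $u\mapsto(P(u),Q(u))$ is injective and $\dim\operatorname{im}P+\dim\operatorname{im}Q\ge 2m$. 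Since $\operatorname{im}P\subseteq V_{d-1}$ and $\operatorname{im}Q\subseteq\operatorname{Ann}(V_1)$ have dimension at most $d-1$, this already yields $\dim V\ge m+1$. Moreover, as every $\rho(u)$ is nilpotent, $\rho(u)-\id$ is invertible, so $v\notin\operatorname{im}P$ and $\phi\notin\operatorname{im}Q$. I expect the genuinely delicate step to be arranging the flag and the pair $(v,\phi)$ so that $v\in V_{d-1}$ and $\phi\in\operatorname{Ann}(V_1)$ while $\phi(Nv)\neq0$, which forces $\dim\operatorname{im}P,\dim\operatorname{im}Q\le d-2$ and hence $\dim V\ge m+2$; the cornered cases where $N$ has small rank will require exploiting the freedom in the triangularizing flag.

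\emph{Part (2).} For the abelian algebra I would translate the problem into linear algebra: a representation of $\mathfrak{a}_n$ is faithful exactly when the images of a basis are $n$ linearly independent pairwise commuting operators, so $\mu(\mathfrak{a}_n)$ is the least $d$ for which $\gl_d(\k)$ contains a commutative subalgebra of dimension $\ge n$, and $\mu_{nil}(\mathfrak{a}_n)$ is the least such $d$ using only nilpotent commuting operators. The key input is Schur's theorem: the maximal dimension of a commutative subalgebra of $d\times d$ matrices is $\lfloor d^2/4\rfloor+1$, attained by the scalars together with an off-diagonal block, and dropping the scalars gives $\lfloor d^2/4\rfloor$ in the nilpotent case. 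Hence $\mu(\mathfrak{a}_n)=\min\{d:\lfloor d^2/4\rfloor\ge n-1\}$ and $\mu_{nil}(\mathfrak{a}_n)=\min\{d:\lfloor d^2/4\rfloor\ge n\}$, and the elementary equivalence $\lfloor d^2/4\rfloor\ge k\iff d\ge\lceil2\sqrt{k}\,\rceil$ produces the stated formulas $\lceil2\sqrt{n-1}\,\rceil$ and $\lceil2\sqrt{n}\,\rceil$. Here the main obstacle is entirely packaged inside Schur's bound, whose hard direction is the sharp upper estimate $\lfloor d^2/4\rfloor+1$; the remaining floor/ceiling arithmetic is routine, modulo the small $n=1$ edge case.
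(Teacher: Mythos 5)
First, note that the paper does not prove this theorem at all: it is quoted from the literature, with part (1) attributed to Burde \cite{B} and part (2) to Schur, Jacobson and Mirzakhani \cite{S, J2, MM}. So your proposal can only be judged on its own merits. Much of it is sound: your matrix-unit construction is exactly the paper's canonical representation $\pi_0$; your reduction of $\mu(\h_m)$ to $\mu_{nil}(\h_m)$ via the generalized weight-space (Zassenhaus) decomposition, together with the observation that every nonzero ideal of $\h_m$ contains $Z$, is correct and is in fact the same device the paper uses in its Theorem \ref{Teo:h_m}; and part (2) does reduce to Schur's bound as you say (the nilpotent variant follows in one line, since adjoining $I$ to the non-unital commutative algebra generated by commuting nilpotents raises the dimension by exactly $1$), with your caveat about $n=1$ being a genuine degeneracy of the stated formula $\lceil 2\sqrt{n-1}\rceil$.

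The genuine gap is the crux of part (1), the bound $\mu_{nil}(\h_m)\geq m+2$. Your symplectic-pairing argument rigorously yields only $\dim V\geq m+1$; the promotion to $m+2$ rests on producing a triangularizing flag and a pair $(v,\phi)$ with $v\in V_{d-1}$, $\phi\in\operatorname{Ann}(V_1)$ and $\phi(Nv)\neq 0$, which you defer and hope to obtain by ``exploiting the freedom in the triangularizing flag.'' That arrangement is not merely delicate but impossible in general: take $\rho=\pi_0$ on $\k^{m+2}$, so $N=\pi_0(Z)$ has $\ker N=\k\{e_1,\dots,e_{m+1}\}$. For \emph{any} triangularizing flag, $V_{d-1}$ is a $\pi_0(\h_m)$-invariant hyperplane; if some $v\in V_{d-1}$ had nonzero last coordinate $a_{m+2}$, then $\pi_0(Y_i)v=a_{m+2}e_{i+1}$ and $\pi_0(Z)v=a_{m+2}e_1$ would place $e_1,\dots,e_{m+1}$ and $v$, hence $m+2$ independent vectors, inside $V_{d-1}$, a contradiction. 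Thus $V_{d-1}\subseteq\ker N$ and $\phi(Nv)=0$ for every admissible $(v,\phi)$, no matter how the flag is chosen. Of course this example has $d=m+2$, so it does not contradict the theorem; but it shows your proposed mechanism cannot close the argument uniformly, and for the case you actually need to exclude ($d=m+1$) you supply no argument at all. Some genuinely different idea --- this is precisely Burde's theorem in \cite{B} --- is required, so as written the proof of the key inequality is incomplete.
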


Let
$\h_m \oplus \mathfrak{a}_n$ be the Heisenberg Lie algebra with abelian factor
$\mathfrak{a}_n$. The upper bound for
$\mu(\h_m \oplus \mathfrak{a}_n)$ given by equation (\ref{eq:Sc}) is
$$
\mu(\h_m \oplus \mathfrak{a}_n) < 2m + n + 2.
$$

Our main result in this paper are the value of
$\mu$ and
$\mu_{nil}$ for the Lie algebra
$\h_m \oplus \mathfrak{a}_n$ for all
$m,n \in \mathbb{N}$.
In this direction, we prove the following theorem.

\begin{theoremSN}\label{Teo:Principal}
Let
$m \in \mathbb{N}_0 \text{ and } n \in \mathbb{N}$. Then
$$
\mu_{nil}(\h_m \oplus \mathfrak{a}_n) = m + \left\lceil 2 \sqrt{n + 1}\right\rceil \; \text{ and }\;
\mu(\h_m \oplus \mathfrak{a}_n) = m + \left\lceil 2 \sqrt{n} \right\rceil.
$$
\end{theoremSN}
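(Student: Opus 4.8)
The plan is to prove both equalities by exploiting that the centre of $\g = \h_m \oplus \mathfrak{a}_n$ is the abelian algebra $\z(\g) = \k Z \oplus \mathfrak{a}_n \cong \mathfrak{a}_{n+1}$ of dimension $n+1$, so that both formulas become the single assertion $\mu(\g) = m + \mu(\mathfrak{a}_{n+1})$ and $\mu_{nil}(\g) = m + \mu_{nil}(\mathfrak{a}_{n+1})$; by Theorem \ref{h_mAbelian}\eqref{abelian} the right-hand sides are $m + \lceil 2\sqrt{n}\,\rceil$ and $m + \lceil 2\sqrt{n+1}\,\rceil$. I would therefore split the work into matching upper and lower bounds, running the nilpotent and the general case in parallel and isolating the one point where they diverge.

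For the upper bounds I would write down an explicit representation on $V = U_1 \oplus M \oplus U_2$, where $M$ has dimension $m$ with basis $f_1,\dots,f_m$, and where $U = U_1 \oplus U_2$ carries a minimal faithful (nil)representation of the centre $\mathcal{Z} \cong \mathfrak{a}_{n+1}$, realised as in Theorem \ref{h_mAbelian}\eqref{abelian} by block maps: $\mathcal{Z} \hookrightarrow \Hom(U_1,U_2)$ in the nilpotent case, and $\mathcal{Z} \hookrightarrow \k\cdot\id_U \oplus \Hom(U_1,U_2)$ (scalar extended by $\id_V$) in the general case, the scalar direction being the source of the saving from $n+1$ to $n$. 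I would arrange the embedding so that $Z$ is sent to a fixed rank-one map $v_0 \mapsto w_0$ with $v_0 \in U_1$, $w_0 \in U_2$, and define the Heisenberg generators by $X_i \cdot v_0 = f_i$ and $Y_i \cdot f_i = w_0$ (all other basis vectors mapped to $0$). A direct check gives $[X_i,Y_i] = Z$ while every other bracket vanishes; every generator other than the scalar is strictly triangular for the flag $U_1 \prec M \prec U_2$, so one gets a nilrepresentation when no scalar is used; and the representation is faithful, since the coefficients of the $X_i$ are detected on $v_0$, those of the $Y_i$ on the $f_i$, and the central coefficients by injectivity of the chosen embedding of $\mathcal{Z}$. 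This produces $\dim V = m + \mu_{nil}(\mathfrak{a}_{n+1})$, respectively $m + \mu(\mathfrak{a}_{n+1})$.

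The lower bounds are where the real work lies. Given a faithful (nil)representation $\rho$ on $V$ with $D = \dim V$, the centre maps to an $(n+1)$-dimensional space of commuting operators, which already forces $D \ge \mu_{nil}(\mathfrak{a}_{n+1})$ (resp.\ $\mu(\mathfrak{a}_{n+1})$) through the classical bound on commutative subalgebras of $\gl_D$ ($\lfloor D^2/4\rfloor$ for nilpotent ones, $\lfloor D^2/4\rfloor + 1$ in general); this is the mechanism behind Theorem \ref{h_mAbelian}\eqref{abelian} and is the only place the two cases differ. To gain the extra summand $m$, the plan is to exhibit a subspace $S \subseteq V$ with $\dim S \ge m$, built from the Heisenberg generators, on whose quotient the centre still acts faithfully; then $D = \dim S + \dim(V/S) \ge m + \mu_{nil}(\mathfrak{a}_{n+1})$ (resp.\ $m + \mu(\mathfrak{a}_{n+1})$). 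The natural candidate is $S = \sum_i \im \rho(X_i)$ (or the span of the $\rho(Y_i)$-images): choosing $v_0$ with $\rho(Z)v_0 \ne 0$ lying in the common kernel of the $\rho(Y_i)$, the vectors $\rho(X_i)v_0$ are independent, because applying $\rho(Y_j)$ and using $[\rho(X_i),\rho(Y_i)] = \rho(Z)$ isolates the coefficient $c_j$ as a nonzero multiple of $\rho(Z)v_0$.

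The hard part will be making these two requirements hold simultaneously and unconditionally. First, one must secure a suitable $v_0$: the relations only guarantee that $\rho(Z)$ is supported transversally to the $\ker\rho(X_i)$ or to the $\ker\rho(Y_i)$, and I expect a short case analysis, using the $X \leftrightarrow Y$ symmetry of $\h_m$, to show that at least one choice yields $m$ independent images. Second, and more delicately, one must prove that no nonzero central operator $c\,\rho(Z) + \sum_j d_j\,\rho(W_j)$ has image contained in $S$; otherwise the centre would cease to act faithfully on $V/S$ and the count would collapse. This should again follow by feeding such a relation through the $\rho(Y_i)$ and invoking $\rho(Z) = [\rho(X_i),\rho(Y_i)]$, but it is the step most likely to demand care, for it is exactly here that the full Heisenberg structure, and not merely the abelian subalgebra $\langle X_1,\dots,X_m\rangle \oplus \mathcal{Z}$, must be used: that subalgebra by itself only gives the weaker estimate $D \ge \lceil 2\sqrt{m+n+1}\,\rceil$.
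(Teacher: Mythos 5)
Your upper-bound half is correct and is essentially the paper's own construction: your $V = U_1 \oplus M \oplus U_2$ with the centre embedded as block maps $U_1 \to U_2$ (plus one scalar direction in the non-nilpotent case) is exactly the representations $\pi_{a,b}$ and $\widetilde{\pi}_{a,b}$ of \S\ref{Upperbound} written coordinate-free, and it correctly yields $\mu_{nil} \leq m + \lceil 2\sqrt{n+1}\rceil$ and $\mu \leq m + \lceil 2\sqrt{n}\rceil$. The lower bound, however, has a genuine gap at exactly the step you flagged, and the gap is not repairable by the ``short case analysis'' you anticipate. Your first candidate $S = \sum_i \im \rho(X_i)$ fails already on the paper's own minimal nilrepresentation: for $\h_1 \oplus \mathfrak{a}_1$ and $\pi_{1,2}$ one has $\im \pi_{1,2}(X_1) = \im \pi_{1,2}(Z)$ (both are the line spanned by the first basis vector), so $Z$ acts as zero on $V/S$ and the centre is not faithful on the quotient. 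Your second candidate $S = \k\{\rho(Y_1)v_0, \dots, \rho(Y_m)v_0\}$ is in general not invariant under the centre --- $\rho(A)\rho(Y_i)v_0 = \rho(Y_i)\rho(A)v_0$ need not lie in $S$ --- so the centre does not even act on $V/S$; and the existence of a suitable $v_0$ (in $\bigcap_j \ker\rho(X_j)$, or by symmetry in $\bigcap_j \ker\rho(Y_j)$, with $\rho(Z)v_0 \neq 0$) is unproven: in $\pi_{1,2}$ the $Y$-side admits no such $v_0$ at all, so symmetry alone cannot settle it, and a maximal-monomial argument with the commuting nilpotents $\rho(X_j)$ only produces $v_0$ with $\rho(X_j)v_0 \in \ker\rho(Z)$, which is too weak for your independence computation.

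The non-nilpotent case breaks more decisively: your claim that the two cases ``only differ'' in Schur's bound is wrong. Twisting any faithful representation by a character, $\rho'(X) = \rho(X) + \chi(X)I$ with $\chi$ vanishing on $[\g,\g]$ and $\chi(X_1) \neq 0 \neq \chi(Y_1)$, makes $\rho'(X_1)$ and $\rho'(Y_1)$ invertible, so no $v_0$ exists on either side and your subspace $S$ cannot be formed. This is why the paper's argument looks so different from yours: for $\mu_{nil}$ it does not pass to a quotient $V/S$ at all, but instead invokes the structural Theorem \ref{Teo:munilh_m} (from \cite{CR}), which supplies vectors $v_1, \dots, v_s$ and a decomposition $\mathcal{N} = \mathcal{N}_1 \oplus \dots \oplus \mathcal{N}_s$ of $\mathcal{N} = \pi(\z)$ with triangularity properties, and combines the evaluation map $\phi(X) = \pi(X)v_1$ with the subalgebra estimate (\ref{eq:lowerbound}) (the robust version of your ``independence of the $\rho(X_i)v_0$'' idea: any subalgebra avoiding $Z$ has dimension at most $m$ plus its central part); and for $\mu$ it first applies Zassenhaus' theorem to split $V$ into generalized weight blocks, locates a block $V_{i_0}$ on which $N_{i_0}$ restricts to a faithful nilrepresentation of $\h_m \oplus \mathfrak{b}$, applies the nil lower bound there and Theorem \ref{h_mAbelian}(\ref{abelian}) to the abelian algebra $\mathfrak{c}_0$ acting on the remaining blocks, and concludes via superadditivity of $\lceil 2\sqrt{\,\cdot\,}\rceil$. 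Both of these mechanisms --- the decomposition of the central image and the Zassenhaus block reduction --- are absent from your proposal, and without them the crucial faithfulness-on-the-quotient claim remains unestablished.
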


The paper is organized as follows. In \S\ref{Upperbound}, the upper bounds for
$\mu(\h_m \oplus \mathfrak{a}_n) \text{ and } \mu_{nil}(\h_m \oplus \mathfrak{a}_n)$ are obtained by explicitly
constructing faithful representations and faithful nilrepresentations of
$\h_m \oplus \mathfrak{a}_n$ of minimal dimension.
In \S\ref{Lowerbound} provides a detailed exposition of the proof of the lower bound of
$\mu(\h_m \oplus \mathfrak{a}_n) \text{ and } \mu_{nil}(\h_m \oplus \mathfrak{a}_n)$, here we use a theorem of Zassenhaus and
some results obtained in \cite{CR}.

Our result for
$m = 0$ coincides with the statement of Theorem \ref{h_mAbelian}.(\ref{abelian}) and it is straightforward to see that
our value of
$\mu_{nil}$ coincides with the statement of Theorem \ref{h_mAbelian}.(\ref{h_m}) if
$n = 0$.

In general
\begin{equation}\label{eq:sumadirecta}
\mu(\mathfrak{g}_1 \oplus \mathfrak{g}_2) \leq \mu(\mathfrak{g}_1) + \mu(\mathfrak{g_2})
\end{equation}
for arbitrary Lie algebras
$\mathfrak{g}_1, \mathfrak{g}_2$. If
$\g = \g_1 \oplus \dots \oplus \g_r$ is a semisimple Lie algebra over
$\mathbb{C}$, where
$\g_j$ is a simple ideal of
$\g$, then
$\mu(\g) = \mu(\g_1) + \dots + \mu(\g_r)$ (see \cite{BM}). But the equality in (\ref{eq:sumadirecta}) seldom occurs for nilpotent
Lie algebras, for instance
$\mu(\mathfrak{a}_n) = \lceil 2 \sqrt{n - 1}\rceil < \sum_{i=1}^n \mu(\mathfrak{a}_1) = n$. As far as we know, there is no example of two nilpotent Lie algebra
$\g_i$,
$\dim \g_i \geq 2$, such that
$\mu(\g_1 \oplus \g_2) = \mu(\g_1) + \mu(\g_2)$.

In this context, the Lie algebra
$\h_m \oplus \mathfrak{a}_n$ can be viewed as an extreme case in this sense, since
\begin{align*}
\mu_{nil}(\h_m \oplus \mathfrak{a}_n)& = \mu_{nil}(\h_m) + \mu_{nil}(\mathfrak{a}_n) - \epsilon \\
\mu(\h_m \oplus \mathfrak{a}_n) &= \mu(\h_m) + \mu(\mathfrak{a}_n) - \epsilon.
\end{align*}
with
$\epsilon = 1 \text{ or }2$, depending on the value of n.

This paper is used in \cite{R} to calculate the
$\mu$-function and
$\mu_{nil}$-function for all
$6$-dimensional nilpotent Lie algebra over
$\k$.

\begin{section}{A family of representations of $\h_m \oplus \mathfrak{a}_n$}\label{Upperbound}


We start with a simple proposition.

\begin{proposition}\label{nilpotente}
Let
$\g$ be a nilpotent Lie algebra and let
$(\pi, V)$ be a representation of
$\g$. Then
$(\pi, V)$ is faithful if and only if
$(\pi\mid_{\mathfrak{z}(\g)}, V)$ is faithful on
$\mathfrak{z}(\g)$.
\end{proposition}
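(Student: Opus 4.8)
The plan is to reduce the statement to the standard fact that in a nilpotent Lie algebra every nonzero ideal meets the center nontrivially. The forward implication is immediate: if $\pi$ is injective on all of $\g$, then its restriction to the subspace $\mathfrak{z}(\g)$ is injective, so $\pi\mid_{\mathfrak{z}(\g)}$ is faithful. All the content lies in the reverse implication.

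For the reverse direction I would argue by contraposition. Suppose $\pi$ is not faithful, so that $\mathfrak{i} := \ker \pi$ is a nonzero ideal of $\g$ (the kernel of a representation is always an ideal). The goal is to produce a nonzero central element on which $\pi$ vanishes, which directly contradicts the faithfulness of $\pi\mid_{\mathfrak{z}(\g)}$. Thus the whole argument hinges on showing that $\mathfrak{i}\cap \mathfrak{z}(\g) \neq 0$.

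To establish $\mathfrak{i}\cap \mathfrak{z}(\g)\neq 0$, I would consider the descending chain $\mathfrak{i} \supseteq [\g,\mathfrak{i}] \supseteq [\g,[\g,\mathfrak{i}]] \supseteq \cdots$ obtained by repeatedly bracketing with $\g$. Because $\mathfrak{i}$ is an ideal, every term of this chain is again contained in $\mathfrak{i}$; because $\g$ is nilpotent, the chain eventually reaches $0$. Let $\mathfrak{j}$ be the last nonzero term. Then $\mathfrak{j}\subseteq \mathfrak{i}$ and $[\g,\mathfrak{j}] = 0$, i.e. $\mathfrak{j}\subseteq \mathfrak{z}(\g)$, so $\mathfrak{j}$ is a nonzero subspace of $\mathfrak{i}\cap \mathfrak{z}(\g)$. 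Choosing any nonzero $z\in \mathfrak{j}$ yields a nonzero central vector with $\pi(z) = 0$, contradicting the hypothesis and completing the contrapositive.

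I do not anticipate a genuine obstacle here, as the argument is elementary once nilpotency is invoked. The only point that needs care is the bookkeeping in the descending-chain step: verifying that the iterated brackets remain inside $\mathfrak{i}$ (which is exactly the ideal property) and that a ``last nonzero term'' exists (which is exactly nilpotency). Alternatively, one may simply cite the standard result that every nonzero ideal of a nilpotent Lie algebra intersects its center nontrivially and apply it to $\ker\pi$.
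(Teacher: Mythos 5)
Your proposal is correct and is in substance the same argument as the paper's: both exploit that $\ker\pi$ is an ideal and use nilpotency to force a nonzero element of $\ker\pi$ into $\mathfrak{z}(\g)$, contradicting faithfulness on the center. The paper phrases this as an element-by-element bracket chase (choosing $X_0\in\ker\pi$ and successively bracketing to stay in $\ker\pi$ but outside the center, contradicting $k$-step nilpotency), while you package the identical mechanism more cleanly as the subspace chain $\mathfrak{i}\supseteq[\g,\mathfrak{i}]\supseteq\cdots$ whose last nonzero term lies in $\mathfrak{i}\cap\mathfrak{z}(\g)$ --- the standard lemma that a nonzero ideal of a nilpotent Lie algebra meets the center.
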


\begin{proof}
Assume that
$\g$ is
$k$-step nilpotent and
$\ker \pi$ is non trivial and let
$X_0 \in \ker \pi, X_0 \neq 0$.
Since
$(\pi\mid_{\mathfrak{z}(\g)}, V)$ is faithful on
$\mathfrak{z}(\g)$, we know that
$X_0 \notin \mathfrak{z}(\g)$ and thus there exists
$X_1 \in \g$ such that
$[X_0, X_1] \neq 0$. Since
$\ker \pi$ is an ideal of
$\g$ and
$(\pi\mid_{\mathfrak{z}(\g)}, V)$ is faithful,
$[X_0, X_1] \in \ker \pi, [X_0, X_1] \notin \mathfrak{z}(\g)$. Hence, there exits
$X_2 \in \g$ such that
$[[X_0, X_1], X_2] \neq 0$ and hence, as before
$[[X_0, X_1], X_2] \in \ker \pi, [[X_0, X_1], X_2] \notin \mathfrak{z}(\g)$.
We now apply this argument again
$k$-times, to obtain
$X_{k-1} \in \g$ such that
$[[\dots, [[X_0, X_1], X_2], \dots], X_k] \neq 0$ but this contradicts the fact that
$\g$ is
$k$-step nilpotent.

The converse is clear.
\end{proof}

The Heisenberg Lie algebra
$\h_m$ has a canonical faithful representation
$(\pi_0, \k^{m + 2})$ which in terms of the canonical basis of
$\k^{m + 2}$ is given by
\begin{equation}
\pi_0\left(\sum_{i=1}^m x_i X_i + \sum_{i=1}^m y_i Y_i + z Z\right) =
\left[
\begin{smallmatrix}
0 & x_1 & \dots & x_m & z \\
  &     &       &     & y_1 \\
  &     &   0   &     & \vdots \\
  &     &       &     & y_m \\
  &     &       &     & 0
\end{smallmatrix}
\right] \;\;\;\;\;\; x_i, y_i, z \in \k.
\end{equation}

Let
$\{A_1, \dots, A_n\}$ be a basis of
$\mathfrak{a}_n$, and let
$\{Z, A_1, \dots, A_n\}$ be a basis of the center
$\mathfrak{z}(\h_m \oplus \mathfrak{a}_n)$.

We shall now construct a family of nilrepresentations of
$\h_m \oplus \mathfrak{a}_n$ that contains a faithful nilrepresentation of minimal dimension.

For
$a, b \in \mathbb{N}$, define the linear maps
\begin{enumerate}[-]
\item $\tau_a : \k\{X_1, \dots, X_m\} \rightarrow M_{a,m}(\k),\;\; \tau_a\left(\sum_{i=1}^m x_i X_i \right) = \left[
                              \begin{array}{ccc}
                               x_1 & \dots & x_m \\
                                   &   &  \\
                                   & 0  &  \\
                                   &   &
                              \end{array}
                              \right]$.
\item $\tau_b : \k\{Y_1, \dots, Y_m\} \rightarrow M_{m,b}(\k),\;\;\tau_b \left(\sum_{i=1}^m y_i Y_i \right)=
\left[
\begin{array}{ccc}
y_1     &   &  \\
 \vdots & 0  &  \\
y_m     &   &
\end{array}
\right]$.
\item $\tau_{a,b}: \k \{Z, A_1, \dots, A_n\} \rightarrow M_{a,b}(\k)$\\
      $\tau_{a,b}\left(z Z + \sum_{r=1}^n a_r A_r \right)_{ij}=
\begin{cases}
z, & \text{ if } i=j=1 ;\\
a_{j-1}, & \text{ if } i=1, j\geq 2;\\
a_{j-1 + (i-1)b}, & \text{ if } 2 \leq i \leq \min \left\{a, \left\lceil \frac{n - b + 1}{b}\right\rceil + 1 \right\};\\
0, & \text{ otherwise }.
\end{cases}$
For instance, if
$n = 10, a= 5, b= 3$ we have
$$\tau_{4,3}\left(z Z + \sum_{r=1}^{10} a_i A_i\right) = \left[
     \begin{smallmatrix}
     z   & a_1 & a_2 \\
     a_3 & a_4  & a_5 \\
     a_6 & a_7 & a_8 \\
     a_9 & a_{10} & 0 \\
     0  &  0  & 0
     \end{smallmatrix}
     \right].$$
\end{enumerate}

Now, we define the representation
$(\pi_{a,b}, \k^{m + a + b})$ of
$\h_m \oplus \mathfrak{a}_n$, in terms of the canonical basis of
$\k^{m + a + b}$. Let
$X = \sum_{i=1}^m x_i X_i, Y = \sum_{i=1}^m y_i Y_i \text{ and } A = \sum_{i=1}^n a_i A_i$. Then
$\pi_ {a,b}$ is  given by the following block matrix
$$
\pi_{a,b}\left( X + Y + z Z + A\right) = \\
\left[
\begin{array}{ccc}
0 & \tau_a(X) & \tau_{a,b}(zZ + A)\\
  & 0 & \tau_b(Y) \\
  &   & 0
\end{array}
\right].
$$

\begin{example}
Let
$m = 2, n= 4$ and
$X= \sum_{i=1}^2 x_iX_i + \sum_{i=1}^2 y_iY_i + zZ + \sum_{i=1}^4 a_iA_i$;
\begin{enumerate}[a)]
\item $a= 2, b=3$ then
      $\pi_{2,3}\left(X \right) = \left[
                                    \begin{smallmatrix}
                                    0 & 0 & x_1 & x_2 & z & a_1 & a_2 \\
                                    0 & 0 & 0   & 0   & a_3 & a_4 & 0 \\
                                    0 & 0 & 0   & 0   & y_1 & 0 & 0 \\
                                    0 & 0 & 0   & 0   & y_2 & 0 & 0 \\
                                    0 & 0 & 0   & 0   & 0 & 0 & 0 \\
                                    0 & 0 & 0   & 0   & 0  & 0 & 0 \\
                                    0 & 0 & 0   & 0   & 0 & 0 & 0
                                    \end{smallmatrix}
                                    \right]$;
\item $a= 1, b=3$ then
      $\pi_{1,3}\left(X \right) = \left[
                                    \begin{smallmatrix}
                                    0 & x_1 & x_2 & z & a_1 & a_2 \\
                                    0 & 0   & 0   & y_1 & 0 & 0 \\
                                    0 & 0   & 0   & y_2 & 0 & 0 \\
                                    0 & 0   & 0   & 0 & 0 & 0 \\
                                    0 & 0   & 0   & 0 & 0 & 0 \\
                                    0 & 0   & 0   & 0 & 0 & 0
                                    \end{smallmatrix}
                                    \right]$.
\end{enumerate}
\end{example}

By straightforward calculation we have
\begin{enumerate}[-]
\item $\left(\pi_{a,b}, \k^{m + a + b}\right)$ is a nilrepresentation of
      $\h_m \oplus \mathfrak{a}_n$.
\item If
      $a= b= 1$ then
      $\pi_{1,1}\mid_{\h_m}= \pi_0$.
\item By Proposition \ref{nilpotente},
      $\left(\pi_{a,b}, \k^{m + a + b}\right)$ is faithful if and only if
      $\tau_{a,b}$ is injective. Then
      $\left(\pi_{a,b}, \k^{m + a + b}\right)$ is faithful if and only if
      $ab \geq n + 1$.
\end{enumerate}

Since
$$
\min\{a + b : ab \geq n+1\} = \left\lceil 2 \sqrt{n + 1} \right\rceil
$$
for every
$n \in \mathbb{N}$, we obtain the following result.

\begin{proposition}\label{coro:munilhm}
Let
$m, n \in \mathbb{N}$ then
$$
\mu_{nil}(\h_m \oplus \mathfrak{a}_n) \leq m + \left\lceil 2 \sqrt{n + 1}  \right\rceil.
$$
\end{proposition}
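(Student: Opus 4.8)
The plan is to use the explicitly constructed family $(\pi_{a,b}, \k^{m+a+b})$ directly as a source of faithful nilrepresentations and then optimize the dimension over the admissible parameters. Since every $\pi_{a,b}$ is a nilrepresentation of $\h_m \oplus \mathfrak{a}_n$ and, by Proposition \ref{nilpotente}, it is faithful exactly when $\tau_{a,b}$ is injective --- which happens precisely when $ab \geq n+1$ --- each pair $(a,b)$ with $ab \geq n+1$ yields a faithful nilrepresentation of dimension $m+a+b$. Hence $\mu_{nil}(\h_m \oplus \mathfrak{a}_n) \leq m + a + b$ for every such pair, and taking the infimum over admissible pairs gives $\mu_{nil}(\h_m \oplus \mathfrak{a}_n) \leq m + \min\{a+b : ab \geq n+1\}$.

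It then remains to evaluate the discrete minimization $\min\{a+b : a, b \in \mathbb{N},\, ab \geq n+1\}$. For the lower bound I would invoke the arithmetic--geometric mean inequality: any admissible pair satisfies $a + b \geq 2\sqrt{ab} \geq 2\sqrt{n+1}$, and since $a+b$ is an integer this forces $a+b \geq \lceil 2\sqrt{n+1}\, \rceil$. For achievability I would exhibit an explicit admissible pair whose sum realizes this ceiling, taking $a$ and $b$ as close as possible to $\sqrt{n+1}$ (for instance $a = \lceil \sqrt{n+1}\, \rceil$ and the smallest $b$ with $ab \geq n+1$) and checking that the resulting sum equals $\lceil 2\sqrt{n+1}\, \rceil$; this is the same elementary packing estimate that underlies the abelian case in Theorem \ref{h_mAbelian}.(\ref{abelian}). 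Combining the two steps yields exactly the asserted bound.

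Modulo the three facts already recorded by direct computation, the proposition is then immediate. The genuine content therefore lies in those facts, and if I had to establish them from scratch the main obstacle would be the injectivity count for $\tau_{a,b}$. Here one observes that $\tau_{a,b}$ is a linear map from the $(n+1)$-dimensional space $\k\{Z, A_1, \dots, A_n\}$ into $M_{a,b}(\k)$ that distributes the coordinates $z, a_1, \dots, a_n$ into distinct entries of the $a \times b$ grid, filling it row by row; it is injective if and only if there are enough slots to accommodate all $n+1$ coordinates without collision, i.e. if and only if $ab \geq n+1$. Verifying that $\pi_{a,b}$ is a Lie algebra homomorphism (the block-upper-triangular form makes the only nontrivial bracket reproduce $[X_i,Y_i]=Z$ in the top-right corner) and that its image consists of strictly upper triangular --- hence nilpotent --- matrices completes the reduction, and the identification of $\pi_{a,b}\!\mid_{\mathfrak{z}(\h_m\oplus\mathfrak{a}_n)}$ with $\tau_{a,b}$ is exactly what lets Proposition \ref{nilpotente} convert faithfulness into injectivity of $\tau_{a,b}$.
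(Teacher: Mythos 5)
Your proposal is correct and follows essentially the same route as the paper: it uses the explicit family $(\pi_{a,b}, \k^{m+a+b})$, reduces faithfulness to injectivity of $\tau_{a,b}$ via Proposition \ref{nilpotente} (equivalently $ab \geq n+1$), and concludes with the identity $\min\{a+b : ab \geq n+1\} = \left\lceil 2\sqrt{n+1}\,\right\rceil$. Your additional sketches --- the AM--GM lower bound with an explicit achieving pair, and the verification that $\pi_{a,b}$ is a nilrepresentation --- merely fill in the facts the paper records as straightforward calculation, so there is no substantive difference.
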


Analogously, we introduce a family of representation of
$\h_m \oplus \mathfrak{a}_n$ that contains a faithful representation of minimal dimension.

Let
$X = \sum_{i=1}^m x_i X_i + \sum_{i=1}^m y_i Y_i + z Z + \sum_{i=1}^n a_i A_i \in \h_m \oplus \mathfrak{a}_n$ and let
$(\pi_{a,b}, \k^{m + a + b})$ be the representation of
$\h_m \oplus \mathfrak{a}_n$ that is  given by
$$
\widetilde{\pi}_{a,b}\left(X \right) =
\pi_{a,b}\left(\sum_{i=1}^m x_i X_i + \sum_{i=1}^m y_i Y_i + z Z + \sum_{i=1}^{n-1} a_i A_i\right) + a_n I
$$
where
$I$ is an identity matrix of size
$m + a + b$.

It is easy to see that
$(\widetilde{\pi}_{a,b}, \k^{m + a + b})$ is a representation of
$\h_m \oplus \mathfrak{a}_n$ and
$\widetilde{\pi}$ is faithful if
$ab \geq n$. Then we obtain the following proposition.

\begin{proposition}\label{coro:muhm}
Let
$m, n \in \mathbb{N}$ then
$$
\mu(\h_m \oplus \mathfrak{a}_n) \leq m + \left\lceil 2 \sqrt{n}  \right\rceil.
$$
\end{proposition}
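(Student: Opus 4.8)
The plan is to verify the two claims the statement rests on — that $\widetilde{\pi}_{a,b}$ is genuinely a representation and that it is faithful as soon as $ab \geq n$ — and then to optimize $a+b$ subject to this constraint. Write $P \colon \h_m \oplus \mathfrak{a}_n \to \h_m \oplus \mathfrak{a}_n$ for the projection that annihilates the coefficient of $A_n$ and fixes all other coordinates, and let $\lambda(X) = a_n$ be the linear functional extracting that coefficient, so that $\widetilde{\pi}_{a,b}(X) = \pi_{a,b}(PX) + \lambda(X)\,I$. Since every bracket in $\h_m \oplus \mathfrak{a}_n$ lands in $\k\{Z\} \subseteq \z(\h_m\oplus\mathfrak{a}_n)$ and in particular has vanishing $A_n$-component, both $P([X,Y]) = [X,Y] = [PX,PY]$ and $\lambda([X,Y]) = 0$ hold. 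Hence $\pi_{a,b}\circ P$ is a representation, $\lambda$ is a character, and because the scalar matrices $\lambda(X)I$ commute with everything the cross terms in $[\widetilde{\pi}_{a,b}(X),\widetilde{\pi}_{a,b}(Y)]$ drop out, leaving $[\pi_{a,b}(PX),\pi_{a,b}(PY)] = \pi_{a,b}(P[X,Y]) = \widetilde{\pi}_{a,b}([X,Y])$. This is the routine verification already signalled in the text.

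For faithfulness the point is to separate the diagonal from the strictly upper-triangular part. Suppose $\widetilde{\pi}_{a,b}(X) = 0$. By construction $\pi_{a,b}(PX)$ is strictly upper triangular, so its diagonal vanishes; the diagonal of $\widetilde{\pi}_{a,b}(X)$ is therefore the constant $a_n$, and $\widetilde{\pi}_{a,b}(X)=0$ forces $a_n = 0$. Once $a_n = 0$ we have $PX = X$ and $\widetilde{\pi}_{a,b}(X) = \pi_{a,b}(X) = 0$, where $X$ now involves only $Z, A_1,\dots,A_{n-1}$ among the central generators. The block form of $\pi_{a,b}$ shows this is equivalent to $\tau_a(X)=0$, $\tau_b(Y)=0$ and $\tau_{a,b}(zZ + \sum_{i=1}^{n-1}a_iA_i)=0$; the first two give all $x_i = y_i = 0$, and since $ab \geq n = (n-1)+1$ the map $\tau_{a,b}$ is injective on $\k\{Z,A_1,\dots,A_{n-1}\}$ (the same counting that earlier made $\pi_{a,b}$ faithful on $\h_m\oplus\mathfrak{a}_{n-1}$), forcing $z = a_1 = \dots = a_{n-1} = 0$. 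Thus $X = 0$ and $\widetilde{\pi}_{a,b}$ is faithful. Note that Proposition \ref{nilpotente} cannot be invoked here, since the term $a_nI$ makes $\widetilde{\pi}_{a,b}$ non-nilpotent, which is precisely why the diagonal/nilpotent splitting is needed.

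Finally I would minimize the dimension $m + a + b$ over all admissible pairs. By the same elementary estimate used for $ab\geq n+1$ in the preceding section,
$$\min\{a+b : a,b\in\mathbb{N},\ ab \geq n\} = \left\lceil 2\sqrt{n}\right\rceil,$$
so choosing $a,b$ realizing this minimum produces a faithful representation $(\widetilde{\pi}_{a,b}, \k^{m+a+b})$ of dimension $m + \lceil 2\sqrt{n}\rceil$, which yields the asserted bound. The only genuinely delicate step is the faithfulness argument, and within it the observation that the character contributes a single new central direction through the trace while costing nothing in dimension; this is exactly what relaxes the constraint from $ab\geq n+1$ to $ab\geq n$ and accounts for the drop by one inside the square root compared with the bound on $\mu_{nil}$.
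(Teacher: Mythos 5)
Your proof is correct and takes essentially the same route as the paper: the paper constructs the same representation $\widetilde{\pi}_{a,b}$, asserts without detail that it is a representation and is faithful when $ab \geq n$, and concludes by minimizing $a+b$. You have merely filled in the verifications the paper labels ``easy to see,'' and your diagonal/nilpotent splitting argument for faithfulness (forcing $a_n=0$ first, then reducing to injectivity of $\tau_{a,b}$ on the $n$-dimensional span of $Z, A_1, \dots, A_{n-1}$) is exactly the intended justification.
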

\end{section}


\begin{section}{The lower bound}\label{Lowerbound}


For this section we prove the lower bound of
$\mu(\h_m \oplus \mathfrak{a}_n) \text{ and } \mu_{nil}(\h_m \oplus \mathfrak{a}_n)$.

We will need the following facts:

Let
      $\g$ be a Lie subalgebra of
      $\h_m \oplus \mathfrak{a}_n$ such that
      $Z \notin \g$, from an argument similar to \cite[Lemma 3]{CR} it follows that
      \begin{equation}\label{eq:lowerbound}
      \dim \g \leq m + \dim \g \cap \mathfrak{z}(\h_m \oplus \mathfrak{a}_n).
      \end{equation}
We will also need the following theorem (see \cite[Theorem 4.2]{CR}).

\begin{theorem}\label{Teo:munilh_m}
Let
$V$ be a finite-dimensional vector space and let
$\mathcal{N}$ be a non-zero abelian subspace of
$\End(V)$ consisting of nilpotent operators. Then there exists a linearly independent set
$B = \{v_1, \dots, v_s\} \subset V$ and a decomposition
$\mathcal{N}= \mathcal{N}_1 \oplus \dots \oplus \mathcal{N}_s$, with
$\mathcal{N}_i \neq 0$ for all
$i$, such that the maps
$F_i : \mathcal{N} \rightarrow V$ defined by
$F_i(N) = N(v_i)$ satisfy
\begin{enumerate}[(1)]
\item $F_i\mid_{\mathcal{N}_i}$ is injective for all
      $i= 1, \dots, s$.
\item $\mathcal{N}_j \subset \ker F_i$ for all
      $1 \leq i < j \leq s$;
\item $\mathcal{N}_j V \subset \im F_i\mid_{\mathcal{N}_i}$ for all
      $1 \leq i < j \leq s$.
\end{enumerate}
Furthermore, given a finite subset
$\{N_1, \dots, N_q\}$ of non-zero operators in
$\mathcal{N}$, the vector
$v_1$ can be chosen so that
$N_k(v_1) \neq 0$ for all
$k = 1, \dots,k$.
\end{theorem}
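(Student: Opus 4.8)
The plan is to build the set $B$ and the decomposition by a greedy, stage-by-stage peeling of $\mathcal{N}$, where at each stage I choose the next vector $v_i$ so as to \emph{maximize the rank} of the evaluation map and then let $\mathcal{N}_i$ be a complement of its kernel. Concretely, set $\mathcal{M}_0 = \mathcal{N}$; having produced $\mathcal{M}_{i-1}$, if $\mathcal{M}_{i-1} = 0$ I stop and put $s = i-1$, and otherwise I choose $v_i \in V$ maximizing $r_i := \dim \mathcal{M}_{i-1} v_i$ (this maximum is at least $1$ because $\mathcal{M}_{i-1}\neq 0$ contains a nonzero operator), set $\mathcal{M}_i = \{N \in \mathcal{M}_{i-1} : N v_i = 0\}$, and take $\mathcal{N}_i$ to be any complement of $\mathcal{M}_i$ in $\mathcal{M}_{i-1}$. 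Since $\dim \mathcal{M}_i = \dim \mathcal{M}_{i-1} - r_i$ with $r_i \geq 1$, the process terminates and yields $\mathcal{N} = \mathcal{N}_1 \oplus \cdots \oplus \mathcal{N}_s$ with each $\mathcal{N}_i \neq 0$, together with the nested family $\mathcal{M}_i = \mathcal{N}_{i+1}\oplus \cdots \oplus \mathcal{N}_s$.

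Granting this construction, conditions (1) and (2) are immediate. Condition (1) holds because, within $\mathcal{M}_{i-1}$, the map $N \mapsto N v_i$ has kernel $\mathcal{M}_i$ and $\mathcal{N}_i$ is a complement, so $F_i$ is injective on $\mathcal{N}_i$; condition (2) holds because for $i < j$ one has $\mathcal{N}_j \subseteq \mathcal{M}_i$ and every $N \in \mathcal{M}_i$ satisfies $N v_i = 0$ by definition. I would then deduce the linear independence of $B$ \emph{from} (1) and (2): if $\sum_i c_i v_i = 0$ has its largest nonzero coefficient at index $i_1$, then applying any $N \in \mathcal{N}_{i_1}$ kills every term with $i < i_1$ by (2), forcing $N v_{i_1} = 0$ for all $N \in \mathcal{N}_{i_1}$, contradicting the injectivity in (1) since $\mathcal{N}_{i_1}\neq 0$. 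The furthermore clause only constrains the first stage: the rank-maximizing locus for $\mathcal{M}_0 = \mathcal{N}$ is a nonempty Zariski-open subset of $V$, and each $\{v : N_k v \neq 0\}$ is nonempty open because $N_k \neq 0$; over a field of characteristic zero (hence infinite) a finite intersection of nonempty open sets is nonempty, so $v_1$ can be chosen to meet all of them.

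The heart of the matter is condition (3), and this is where the rank-maximizing choice is essential. For $i<j$ we have $\mathcal{N}_j \subseteq \mathcal{M}_i$ and $\im F_i|_{\mathcal{N}_i} = \mathcal{N}_i v_i = \mathcal{M}_{i-1} v_i$, so (3) reduces to the key claim
\[
\mathcal{M}_i V \subseteq \mathcal{M}_{i-1} v_i .
\]
I would prove this as pure linear algebra. Fix $w \in V$ and set $\Phi(N) = N v_i$, $\Psi(N) = N w$, both linear on $\mathcal{M}_{i-1}$; maximality of $r_i = \dim\im\Phi$ gives $\dim \im(\Phi + t\Psi) = \dim \mathcal{M}_{i-1}(v_i + t w) \leq r_i$ for every scalar $t$. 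This forces $\Psi(\ker \Phi) \subseteq \im \Phi$, i.e. $\mathcal{M}_i w \subseteq \mathcal{M}_{i-1} v_i$, and letting $w$ range over $V$ gives the claim. To justify the implication, suppose some $N_0 \in \ker\Phi$ had $\Psi(N_0) \notin \im\Phi$, and pick a basis $c_1,\dots,c_{r_i}$ of a complement of $\ker\Phi$ in $\mathcal{M}_{i-1}$. Then the coefficient of $t$ in $\big(\bigwedge_\ell (\Phi(c_\ell)+t\Psi(c_\ell))\big)\wedge t\Psi(N_0)$ equals $\big(\bigwedge_\ell \Phi(c_\ell)\big)\wedge \Psi(N_0)$, which is nonzero since the $\Phi(c_\ell)$ form a basis of $\im\Phi$ and $\Psi(N_0)\notin \im\Phi$. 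Hence this exterior product is a nonzero polynomial in $t$, so it is nonzero for some $t$, exhibiting $r_i+1$ independent vectors in $\im(\Phi+t\Psi)$ and contradicting $\dim\im(\Phi+t\Psi)\le r_i$.

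I expect condition (3), equivalently the exterior-algebra lemma above, to be the main obstacle, since it is the only place where maximality of the rank (rather than mere non-vanishing) is used; conditions (1) and (2), the independence of $B$, and the furthermore clause are comparatively routine consequences of the greedy construction.
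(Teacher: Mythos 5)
Your proposal is correct, but a direct comparison with ``the paper's own proof'' is not possible here: the paper does not prove this statement at all --- it is quoted verbatim from \cite[Theorem 4.2]{CR} and used as a black box. Measured against the cited source, your greedy construction (choose $v_i$ maximizing $\dim \mathcal{M}_{i-1}v_i$, peel off a complement $\mathcal{N}_i$ of the kernel, iterate) is the same kind of generic-vector argument that underlies the original proof, so in spirit you have reconstructed it. Your treatment is sound at every step I checked: (1) and (2) are indeed immediate from the construction; deducing the linear independence of $B$ from (1) and (2) via the largest nonzero coefficient is fine; the ``furthermore'' clause follows correctly because the rank-maximizing locus is cut out by the nonvanishing of some $r_1\times r_1$ minor (entries linear in $v$), hence nonempty Zariski-open, and $\k$ has characteristic zero so is infinite; and your exterior-algebra proof of the key semicontinuity lemma ($\dim \im(\Phi+t\Psi)\le r_i$ for all $t$ forces $\Psi(\ker\Phi)\subseteq \im\Phi$, since otherwise the coefficient of $t$ in $\bigl(\bigwedge_\ell(\Phi(c_\ell)+t\Psi(c_\ell))\bigr)\wedge t\Psi(N_0)$ is nonzero) is complete and correct --- this is exactly the maximality-of-rank mechanism that condition (3) requires, and you rightly identify it as the heart of the matter. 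One observation worth making explicit: your argument never uses that $\mathcal{N}$ is abelian or consists of nilpotent operators, so you have in fact proved the statement for an \emph{arbitrary} nonzero subspace of $\End(V)$ over any infinite field; those hypotheses are inherited from the application (e.g.\ nilpotency of $\pi(X)$ is what forces $i_0>1$ in the paper's proof of Theorem \ref{TeoNil:h_m}), not needed for the combinatorial decomposition itself. Finally, note the statement's ``for all $k=1,\dots,k$'' is a typo for $k=1,\dots,q$, which is what you correctly proved.
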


We are now ready to prove the first main result of this section.

\begin{theorem}\label{TeoNil:h_m}
Let
$m, n \in \mathbb{N}$ then
$$
\mu_{nil}(\h_m \oplus \mathfrak{a}_n) \geq m + \left\lceil 2 \sqrt{ n + 1 } \right\rceil.
$$
\end{theorem}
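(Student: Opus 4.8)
The plan is to reduce the statement to a property of the center and then recover the summand $m$ from the Heisenberg relations. Let $(\pi,V)$ be a faithful nilrepresentation; I want to show $\dim V\ge m+\lceil 2\sqrt{n+1}\,\rceil$. By Proposition \ref{nilpotente} the restriction of $\pi$ to the center $\z=\z(\h_m\oplus\mathfrak{a}_n)$ is faithful, so $\mathcal N:=\pi(\z)$ is a nonzero abelian subspace of $\End(V)$ consisting of nilpotent operators, with $\dim\mathcal N=n+1$. First I would apply Theorem \ref{Teo:munilh_m} to $\mathcal N$, using its final clause with $\{N_1\}=\{\pi(Z)\}$, to obtain a decomposition $\mathcal N=\mathcal N_1\oplus\dots\oplus\mathcal N_s$ and linearly independent vectors $v_1,\dots,v_s$ with $\pi(Z)v_1\neq 0$. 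Write $d_i=\dim\mathcal N_i$, so that $\sum_i d_i=n+1$, and set $W_1:=\im F_1\mid_{\mathcal N_1}=\mathcal N_1 v_1$, a subspace of dimension $d_1$.

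The abelian core of the bound comes next. From property (3) with $i=1$ one gets $\mathcal N_j v_j\subseteq\mathcal N_j V\subseteq W_1$ for $j\ge 2$, whence $d_1=\max_i d_i\ge (n+1)/s$. I would then show $\k\{v_1,\dots,v_s\}\cap W_1=0$: given $\sum_j c_j v_j=Nv_1$ with $N\in\mathcal N_1$, apply operators $M\in\mathcal N_j$ for $j=s$ down to $j=2$; since $\mathcal N$ is abelian and $Mv_1=0$ by property (2), each such $M$ annihilates $W_1$ while detecting $v_j$ through the injectivity in property (1), forcing $c_s=\dots=c_2=0$, and then nilpotency of $N$ forces $c_1=0$. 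Hence $U:=\k\{v_1,\dots,v_s\}\oplus W_1$ has dimension $s+d_1$, and from $s\,d_1\ge n+1$ and the arithmetic--geometric mean inequality I obtain $\dim V\ge s+d_1\ge 2\sqrt{s\,d_1}\ge\lceil 2\sqrt{n+1}\,\rceil$.

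To gain the extra summand $m$, I would adjoin the vectors $\pi(Y_1)v_1,\dots,\pi(Y_m)v_1$ and argue that $U+\sum_i\k\,\pi(Y_i)v_1$ has dimension $s+d_1+m$. In a putative dependence $\sum_j c_j v_j+w+\sum_i e_i\pi(Y_i)v_1=0$ with $w\in W_1$, the same peeling kills $c_2,\dots,c_s$, now using that $Z,A_1,\dots,A_n$ are central, so each $M\in\mathcal N_j$ commutes with $\pi(Y_i)$ and $M\pi(Y_i)v_1=\pi(Y_i)Mv_1=0$; what remains is $\big(\sum_i e_i\pi(Y_i)+P+c_1\,\id\big)v_1=0$ for some $P\in\mathcal N$, and nilpotency of the operator $\sum_i e_i\pi(Y_i)+P=\pi\!\left(\sum_i e_iY_i+P'\right)$ forces $c_1=0$. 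Thus everything reduces to the key claim that $\pi(Y_1)v_1,\dots,\pi(Y_m)v_1$ are linearly independent modulo $\mathcal N v_1$.

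This last claim is where the Heisenberg relation $Z=[X_i,Y_i]$ must be used, and it is the main obstacle. Applying $\pi(X_k)$ to a relation $\sum_i e_i\pi(Y_i)v_1\in\mathcal N v_1$ produces the term $e_k\,\pi(Z)v_1$ together with terms in $\pi(X_k)v_1$; since $\pi(Z)v_1\neq 0$ one would like to conclude $e_k=0$, but the terms involving $\pi(X_k)v_1$ need not vanish, so a naive computation fails. I expect to overcome this by choosing $v_1$ extremally, using Zassenhaus's simultaneous triangularization of the nilrepresentation to place $\pi(Z)v_1$ in the socle of the associated flag, and by invoking inequality (\ref{eq:lowerbound}) applied to the stabilizer subalgebra $\{W\in\k\{Y_1,\dots,Y_m\}\oplus\z:\pi(W)v_1=0\}$, which contains no nonzero multiple of $Z$ precisely because $\pi(Z)v_1\neq 0$. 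Combining the three preceding paragraphs then yields $\dim V\ge s+d_1+m\ge m+\lceil 2\sqrt{n+1}\,\rceil$.
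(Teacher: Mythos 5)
Your first two paragraphs are sound and coincide with parts (ii) and (iii) of the paper's proof, and your peeling argument (centrality of $\z$ plus nilpotency) is correct. But the proof stands or falls with the ``key claim'' of your last paragraph, and that claim is not merely unproven --- it is false, irreparably so, for a fixed faithful nilrepresentation. Take $m=n=1$ and let $\pi$ be the representation $\pi_{1,2}$ of \S\ref{Upperbound} precomposed with the automorphism $X_1\mapsto Y_1$, $Y_1\mapsto X_1$, $Z\mapsto -Z$, $A_1\mapsto A_1$; in the canonical basis $e_1,\dots,e_4$ this gives $\pi(Y_1)=E_{12}$, $\pi(X_1)=E_{23}$, $\pi(Z)=-E_{13}$, $\pi(A_1)=E_{14}$. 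Any admissible $v_1$ must satisfy $\pi(Z)v_1\neq 0$, i.e.\ have nonzero $e_3$-coordinate, and then $\mathcal{N}v_1=\k e_1\supseteq \im \pi(Y_1)$, so $\pi(Y_1)v_1\in\mathcal{N}v_1$ for \emph{every} admissible $v_1$; no extremal or socle-adapted choice can rescue the claim. (Here $v_1=e_3$, $v_2=e_4$, $\mathcal{N}_1=\k\pi(Z)$, $\mathcal{N}_2=\k\pi(A_1)$ satisfy all conclusions of Theorem \ref{Teo:munilh_m}, and the missing fourth dimension is supplied by $\pi(X_1)v_1=e_2$, not by $\pi(Y_1)v_1$.) Your fallback of applying (\ref{eq:lowerbound}) to the stabilizer inside $\k\{Y_1,\dots,Y_m\}\oplus\z$ also yields nothing: for that abelian subalgebra the inequality $\dim\mathfrak{g}_0\le m+\dim\ker F_1$ is consistent with all of $\k\{Y_1,\dots,Y_m\}$ annihilating $v_1$, which is exactly what happens in the example.

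The paper's repair is symmetric in the $X_i$ and $Y_i$, and you already had the needed tool in hand: consider the orbit map of the \emph{whole} algebra, $\phi:\h_m\oplus\mathfrak{a}_n\to V$, $\phi(X)=\pi(X)v_1$. Then $\ker\phi$ is a subalgebra with $Z\notin\ker\phi$, so (\ref{eq:lowerbound}) --- which is where the Heisenberg relations enter, via the isotropy of a subalgebra avoiding $Z$ --- gives $\dim\ker\phi\le m+\dim(\ker\phi\cap\z)=m+\dim\ker F_1$, hence by rank--nullity $\dim\im\phi\ge m+\dim\im F_1=m+d_1$. Your peeling argument applies verbatim to show $\im\phi\cap\k\{v_1,\dots,v_s\}=0$ (this is the paper's claim (ii), stated for all of $\im\phi$ rather than just $W_1$), and then $\dim V\ge s+\dim\im\phi\ge s+d_1+m\ge m+\lceil 2\sqrt{n+1}\,\rceil$. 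The moral, which the counterexample above makes precise, is that which of $\pi(X_i)v_1$, $\pi(Y_i)v_1$ contributes new directions depends on the representation, so the subalgebra to which (\ref{eq:lowerbound}) is applied must see the $X_i$ and $Y_i$ simultaneously; you applied the right inequality to the wrong subalgebra.
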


\begin{proof}
Let
$(\pi, V)$ be a faithful nilrepresentation of
$\h_m \oplus \mathfrak{a}_n$ and let
$\mathfrak{z}= \k\{Z\} \oplus \mathfrak{a}_n$. We apply Theorem \ref{Teo:munilh_m} to the subspace
$\mathcal{N}= \pi(\mathfrak{z})$. We obtain a linearly independent set
$B = \{v_1, \dots, v_s\} \subset V$ and a decomposition
$\mathcal{N}= \mathcal{N}_1 \oplus \dots \oplus \mathcal{N}_s$, with
$\mathcal{N}_i \neq 0$ for all
$i$, such that the maps
$F_i : \mathcal{N} \rightarrow V$ defined by
$F_i(N) = N(v_i)$ satisfy $(1), (2) \text{ and } (3)$ of Theorem \ref{Teo:munilh_m}
We additionally require that
$\pi(Z)(v_1) \neq 0$.

Let
$\phi$ be a linear map
$$
\phi : \h_m \oplus \mathfrak{a}_n \rightarrow V, \;\; \phi(X)= \pi(X)v_1.
$$
Note that
$\phi\mid_{\mathfrak{z}} = F_1 \circ \pi$. We claim that
\begin{enumerate}[(i)]
\item $\dim \im \phi + \dim \ker F_1 \geq m +  n + 1$.
\item $\im \phi \cap \k B = 0$, and thus
      $\dim V \geq s + \dim \im \phi$.
\item $n + 1 \leq s \dim \im F_1$, and thus
      $\left\lceil 2\sqrt{n+1} \right\rceil \leq s + \dim \im F_1$.
\end{enumerate}

\medskip

\noindent\emph{Proof of (i).}
      It is clear that
      $\ker \phi$ is a subalgebra of
      $\h_m \oplus \mathfrak{a}_n$ such that
      $Z \notin \ker \phi$. Since
      $\pi(\ker \phi \cap \mathfrak{z}) = \ker F_1$, it follows from (\ref{eq:lowerbound}) that
      $$
      \dim \ker \phi \leq m + \dim \ker F_1.
      $$
      Since
      $\dim \ker \phi + \dim \im \phi =  2m + 1 + n$, we obtain $(i)$.

\medskip

\noindent\emph{Proof of (ii).}
Let
$v \in \im \phi \cap \k B$ then
$v = \sum_{i=1}^s a_i v_i$ and since
$v \in \im \phi$ there is
$X \in \h_m \oplus \mathfrak{a}_n$ such that
$\pi(X)(v_1) = v$. Hence
\begin{equation}\label{eq:1}
\pi(X)(v_1) = \sum_{i=1}^s a_i v_i.
\end{equation}
We must prove that
$a_i = 0$ for all
$i$. Assume that
$a_i \neq 0$ for some
$i$ and let
$i_0 = \max \{i : a_i \neq 0\}$. Since
$\pi(X)$ is nilpotent endomorphism on
$V$, its only eigenvalue is zero and thus
$i_0 > 1$. Let
$N \in \mathcal{N}_0$,
$N \neq 0$ and let us apply
$N$ to both sides of equation (\ref{eq:1}). We obtain zero on the left hand side, since
$N \in \\pi(\mathfrak{z})$,
$i_0 > 1$. But from
$(1) \text{ and } (2)$ of the Theorem \ref{Teo:munilh_m}, we obtain on the right hand side
$a_{i_0} N(v_{i_0}) \neq 0$, which is a contradiction.

\medskip

\noindent\emph{Proof of (iii).} Part
$(1) \text{ and } (3)$ combined imply that
$\dim \mathcal{N}_x \geq \dim \mathcal{N}_x $ if
$x < y$. In particular
$\dim \mathcal{N}_1 \geq \mathcal{N}_j$ for all
$j=1, \dots, s$ and thus
$$
n + 1 = \dim \mathcal{N} = \sum_{j=1}^s \dim \mathcal{N}_j \leq s \dim \mathcal{N}_1 = s \dim \im F_1.
$$
Since
$\min\{a+ b : a, b \in \mathbb{N} \text{ and } ab \geq n+1\}= \left\lceil 2 \sqrt{n+1} \right\rceil$ for all
$n \in \mathbb{N}$, we obtain
$(iii)$.

From
$(i) \text{ and } (ii)$ it follows that
$$
\dim V + \dim \ker F_1 \geq m + n + 1 + s,
$$
and combining it with
$(iii)$ we obtain
$$
\dim V + \dim \ker F_1 + \dim \im F_1 \geq m + n + 1 + \left\lceil 2\sqrt{n+1} \right\rceil.
$$
Finally, since
$\dim \ker F_1 + \dim \im F_1 = n + 1$ we obtain
$$
\dim V \geq m + \left\lceil 2\sqrt{n+1} \right\rceil
$$
and this completes the proof.
\end{proof}

First we recall a theorem due to Zassenhaus (see \cite[page 41]{J}) for to prove the lower bound for
$\mu(\h_m \oplus \mathfrak{a}_n)$.

\begin{theorem}\cite[Theorem 2.1]{CR}
Let
$\g$ be finite dimensional nilpotent Lie algebra and let
$(\pi, V)$ be a finite-dimensional representation of
$\g$. If
$\k$ is algebraically closed then
$$
V= V_1 \oplus \dots \oplus V_s,
$$
such that
$\pi(X)\mid_{V_i}$ is a scalar
$\lambda_i$ plus a nilpotent operator
$N_i(X)$ on
$V_i$ for all
$X \in \g$ and
$i=1, \dots, s$. Moreover,
$(N_i, V_i)$ is a nilrepresentation of
$\g$ for all
$i=1, \dots, s$.
\end{theorem}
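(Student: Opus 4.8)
The plan is to prove this classical weight–space decomposition (due to Zassenhaus) by induction on $\dim\g$, the key tool being a commutation lemma that propagates invariance of generalized eigenspaces through the nilpotency of $\g$. For a linear functional $\lambda\in\g^{*}$ set
$$
V_{\lambda}=\{v\in V:\ (\pi(X)-\lambda(X))^{\dim V}v=0\ \text{ for all } X\in\g\}.
$$
These are the generalized weight spaces, and the theorem amounts to showing $V=\bigoplus_{\lambda}V_{\lambda}$, that each $V_{\lambda}$ is a $\g$-submodule, and that $N_{i}(X):=\pi(X)-\lambda(X)\id$ is nilpotent on $V_{\lambda}$ for every $X$. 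When $\g$ is abelian the family $\{\pi(X)\}_{X\in\g}$ commutes, so choosing a basis $X_{1},\dots,X_{r}$ of $\g$ and refining the primary decomposition of $\pi(X_{1})$ successively by $\pi(X_{2}),\dots,\pi(X_{r})$ produces the simultaneous decomposition; on each piece every $\pi(X_{j})-\lambda(X_{j})\id$ is nilpotent and these commute, so their linear combinations stay nilpotent, settling the base case.

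For the inductive step I would choose a hyperplane $\h\subset\g$ with $[\g,\g]\subseteq\h$; then $\h$ is a codimension-one ideal (since $[\g,\h]\subseteq[\g,\g]\subseteq\h$) and $\g=\h\oplus\k X_{0}$ for some $X_{0}$. By the inductive hypothesis applied to the nilpotent algebra $\h$ we get $V=\bigoplus_{\mu}V^{\h}_{\mu}$. The crucial point is that each $V^{\h}_{\mu}$ is stable under $\pi(X_{0})$, and this follows from the commutation lemma: if $A,B\in\End(V)$ satisfy $(\ad A)^{k}(B)=0$, then $B$ preserves every generalized eigenspace of $A$. One proves it from the operator identity $(A-\alpha)^{N}B=\sum_{j}\binom{N}{j}\big((\ad A)^{j}B\big)(A-\alpha)^{N-j}$, in which only the terms $j<k$ survive, so $(A-\alpha)^{N}Bv=0$ for $N$ large whenever $v$ lies in the generalized $\alpha$-eigenspace. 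Taking $A=\pi(Y)$ and $B=\pi(X_{0})$ for $Y\in\h$, nilpotency of $\g$ gives $(\ad \pi(Y))^{k}\pi(X_{0})=\pi\big((\ad Y)^{k}X_{0}\big)=0$, so $\pi(X_{0})$ preserves each generalized eigenspace of $\pi(Y)$ and hence their intersection $V^{\h}_{\mu}$. Thus each $V^{\h}_{\mu}$ is a $\g$-submodule; refining it into the generalized eigenspaces $U=V^{\h}_{\mu}[c]$ of $\pi(X_{0})|_{V^{\h}_{\mu}}$ and applying the lemma again (now with the roles of $X_{0}$ and $Y$ exchanged) shows each $U$ is $\g$-invariant, giving the candidate decomposition $V=\bigoplus U$.

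It then remains to verify that on a fixed piece $U$ the operator $\pi(X)-\lambda(X)\id$ is nilpotent for all $X\in\g$, where $\lambda(Y+tX_{0}):=\mu(Y)+tc$. First, every weight vanishes on $[\g,\g]$: for $W=[Z,Z']\in[\g,\g]\subseteq\h$ one has $\pi(W)|_{U}=\mu(W)\id+(\text{nilpotent})$, while $\operatorname{tr}\pi(W)|_{U}=\operatorname{tr}[\pi(Z),\pi(Z')]|_{U}=0$, forcing $\mu(W)=0$; hence $\sigma(X):=\pi(X)-\lambda(X)\id$ is a Lie algebra homomorphism $\g\to\gl(U)$ onto a solvable image. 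By construction $\sigma(Y)$ (for $Y\in\h$) and $\sigma(X_{0})$ are nilpotent; applying Lie's theorem to triangularize $\sigma(\g)$, the diagonal functionals (the eigenvalues of $\sigma$) vanish on $\h$ and on $X_{0}$, hence identically, so every $\sigma(X)$ has all eigenvalues zero and is nilpotent. This exhibits $U$ as a single generalized weight space $V_{\lambda}$ with $(N_{i},V_{i})=(\sigma,U)$ a nilrepresentation, completing the induction.

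The step I expect to be the main obstacle is this last one: passing from ``each generator acts as scalar-plus-nilpotent'' to ``every element does.'' Because $\sigma(Y)$ and $\sigma(X_{0})$ need not commute, their sum is not automatically nilpotent, and the naive linear-algebra argument breaks down. The resolution is the trace identity $\operatorname{tr}[\,\cdot\,,\,\cdot\,]=0$ (which kills the weight on $[\g,\g]$) together with the simultaneous triangularization of the solvable image $\sigma(\g)$; only after these two inputs does the nilpotency of $\pi(X)-\lambda(X)\id$ follow for arbitrary $X$. The commutation lemma, by contrast, is elementary once the binomial operator identity is in hand.
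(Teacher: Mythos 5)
The paper offers no proof of this statement at all --- it is recalled as a known theorem of Zassenhaus, with citations to \cite{CR} and to Jacobson \cite{J} --- so the only meaningful comparison is with the classical argument in those sources, and your proof is essentially exactly that argument: generalized weight spaces, the binomial commutation identity $A^{N}B=\sum_{j}\binom{N}{j}\bigl((\ad A)^{j}B\bigr)A^{N-j}$ to get invariance of generalized eigenspaces, induction through a codimension-one ideal containing $[\g,\g]$, the trace identity to force the weights to vanish on $[\g,\g]$, and Lie's theorem to upgrade nilpotency from the generators $\h$ and $X_{0}$ to all of $\g$. Your proof is correct, you rightly identify the last step as the genuinely delicate one (noncommuting nilpotents do not sum to a nilpotent), and it is worth noting that the vanishing of $\lambda_{i}$ on $[\g,\g]$ is also precisely what makes each $N_{i}=\pi-\lambda_{i}\id$ a Lie algebra homomorphism, i.e.\ what justifies the final clause that $(N_{i},V_{i})$ is a nilrepresentation.
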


We end the paper with the second main result of this section.

\begin{theorem}\label{Teo:h_m}
Let
$m, n \in \mathbb{N}$ then
$$
\mu(\h_m \oplus \mathfrak{a}_n) \geq  m + \left\lceil 2 \sqrt{ n } \right\rceil.
$$
\end{theorem}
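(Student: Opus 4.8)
The plan is to reduce to the setting of Theorem~\ref{TeoNil:h_m} by means of Zassenhaus' theorem, paying for the passage from $n+1$ to $n$ with the scalar characters produced by the decomposition. First I would reduce to the case $\k=\overline{\k}$: if $(\pi,V)$ is a faithful representation over $\k$, then $\pi\otimes_\k\overline{\k}$ is a faithful representation of $(\h_m\oplus\mathfrak{a}_n)\otimes_\k\overline{\k}$ of the same dimension, so it suffices to bound $\dim V$ from below over $\overline{\k}$, and this is exactly what makes Zassenhaus' theorem available.

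Applying Zassenhaus' theorem gives $V=V_1\oplus\dots\oplus V_s$ with $\pi(X)|_{V_i}=\lambda_i(X)\,\id+N_i(X)$ and $(N_i,V_i)$ a nilrepresentation. Since each $\lambda_i$ is a character it vanishes on the derived algebra $\k\{Z\}$, so $\lambda_i(Z)=0$ and $\pi(Z)|_{V_i}=N_i(Z)$ is nilpotent; by Proposition~\ref{nilpotente} faithfulness forces $\pi(Z)\neq 0$. Writing $\rho=\bigoplus_i N_i$ for the block-diagonal nilpotent part, $\rho$ is a nilrepresentation with $\rho(Z)=\pi(Z)\neq 0$, and $\mathcal N=\rho(\mathfrak{z})$ is an abelian family of nilpotent operators, where $\mathfrak{z}=\k\{Z\}\oplus\mathfrak{a}_n$ has dimension $n+1$. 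The characters also assemble into $\Lambda=(\lambda_1,\dots,\lambda_s)\colon\mathfrak{z}\to\k^s$.

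Now I would run the counting of Theorem~\ref{TeoNil:h_m} with $\pi(\mathfrak{z})$ replaced by $\mathcal N=\rho(\mathfrak{z})$: apply Theorem~\ref{Teo:munilh_m} to $\mathcal N$, using its last clause to choose $v_1$ with $\rho(Z)v_1\neq 0$, set $\phi(X)=\pi(X)v_1$, and reprove the three estimates. The subalgebra inequality \eqref{eq:lowerbound} applies to $\ker\phi$ exactly as before, and it is the nilpotency of $\rho(X)$ (rather than of $\pi(X)$) that is needed in the eigenvalue step. Carrying this through yields $\dim V\ge m+\lceil 2\sqrt{\dim\mathcal N}\,\rceil$, where $\dim\mathcal N=n+1-k$ and $k=\dim\ker(\rho|_{\mathfrak{z}})$ is the dimension of the space $K$ of central elements acting as a scalar on every block. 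When $k=1$—one central direction is genuinely semisimple—this already gives $m+\lceil 2\sqrt{n}\,\rceil$, which is precisely the mechanism behind the upper bound, where $A_n$ was realized by the summand $a_n I$; the case $k=0$ recovers the sharper nilpotent bound.

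The hard part is $k\ge 2$, where the naive estimate $m+\lceil 2\sqrt{n+1-k}\,\rceil$ is too weak. Here faithfulness of $\pi$ makes $\Lambda|_K\colon K\hookrightarrow\k^s$ injective, so the $k$ independent characters are realized by at least $k$ distinct joint eigenvalues, hence by several Zassenhaus blocks, and the lost dimensions must be recovered from this block multiplicity. Concretely, I expect to run the nilpotent counting inside the single block $V_1$ on which $\pi(Z)$ is nonzero, so that $V_1$ already contains a faithful copy of $\h_m$ together with the portion of the center seen nilpotently there (costing $\dim V_1\ge m+\lceil 2\sqrt{d_1}\,\rceil$ for the relevant $d_1$), and then account separately for the remaining blocks, each of which supplies an independent semisimple character of $\mathfrak{a}_n$. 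The crux is the bookkeeping that distributes the $n+1$ dimensions of $\mathfrak{z}$ between the part realized nilpotently in $V_1$ and the part realized by scalars across the extra blocks, and then checks that the total is minimized exactly at $m+\lceil 2\sqrt{n}\,\rceil$; this optimization, rather than any single inequality, is where I expect the real difficulty to lie.
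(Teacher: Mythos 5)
Your reduction to $\overline{\k}$, the passage to the block-diagonal nilpotent part $\rho=\oplus_i N_i$, and the treatment of $k\le 1$ are all sound: $\rho$ descends to a faithful nilrepresentation of $(\h_m\oplus\mathfrak{a}_n)/K\cong \h_m\oplus\mathfrak{a}_{n-k}$, and Theorem \ref{TeoNil:h_m} then gives $\dim V\ge m+\left\lceil 2\sqrt{n+1-k}\right\rceil$, which settles $k=0,1$. But the proposal stops exactly where the theorem still needs proving: for $k\ge 2$ you offer only a plan, and the plan's accounting device does not work as stated. Two concrete problems. First, injectivity of $\Lambda|_K$ yields only $s\ge k$, i.e.\ roughly $k-1$ extra dimensions in the blocks other than the distinguished one; but these cannot be added to the bound $m+\left\lceil 2\sqrt{n+1-k}\right\rceil$, because that bound was extracted from \emph{all} of $V$, not from a single block --- adding them double-counts. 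Your sketch senses this and retreats to ``run the counting inside the single block $V_1$,'' which is a different argument that is never carried out. Second, the assertion that each remaining block ``supplies an independent semisimple character'' is false in general: on the blocks $V_i$, $i\ne i_0$, the central elements killed by $N_{i_0}$ may act with nontrivial \emph{nilpotent} parts, and faithfulness can be achieved through those rather than through the characters $\lambda_i$; any bookkeeping that counts only characters or block multiplicities therefore breaks.

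The paper closes this case without any delicate optimization, by splitting $V$ once and using two different known bounds on the two pieces. Fix $V_{i_0}$ with $N_{i_0}(Z)\ne 0$, set $\mathfrak{c}=\ker N_{i_0}$ (your $K$ sits inside it; $\mathfrak{c}\cap\h_m=0$ by Proposition \ref{nilpotente}), and choose a complement $\mathfrak{b}$ with $\h_m\oplus\mathfrak{a}_n=\h_m\oplus\mathfrak{b}\oplus\mathfrak{c}$. Then $(N_{i_0}\mid_{\h_m\oplus\mathfrak{b}},V_{i_0})$ is a faithful nilrepresentation of the \emph{subalgebra} $\h_m\oplus\mathfrak{b}$, so Theorem \ref{TeoNil:h_m} applies to the single block: $\dim V_{i_0}\ge m+\left\lceil 2\sqrt{\dim\mathfrak{b}+1}\right\rceil$. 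Meanwhile $\mathfrak{c}_0=\mathfrak{c}\cap\ker\lambda_{i_0}$ acts faithfully on $\oplus_{i\ne i_0}V_i$ --- as an arbitrary commuting family, scalar-plus-nilpotent, exactly the mixed situation your character count misses --- and the abelian value $\mu(\mathfrak{a}_{n'})=\left\lceil 2\sqrt{n'-1}\right\rceil$ from Theorem \ref{h_mAbelian}(\ref{abelian}) (crucially $\mu$, not $\mu_{nil}$: the $-1$ inside the root absorbs the one semisimple direction that powered your $k=1$ case) gives $\dim\oplus_{i\ne i_0}V_i\ge\left\lceil 2\sqrt{\dim\mathfrak{c}_0-1}\right\rceil$. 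The ``hard optimization'' you anticipated then collapses into the single concavity inequality $\left\lceil 2\sqrt{a}\right\rceil+\left\lceil 2\sqrt{b}\right\rceil\ge\left\lceil 2\sqrt{a+b}\right\rceil$ with $a=\dim\mathfrak{b}+1$ and $b=\dim\mathfrak{c}_0-1$, since $a+b=\dim\mathfrak{b}+\dim\mathfrak{c}=n$. So your outline points in the right direction, but it is missing the two decisive steps --- the faithful nilrepresentation of $\h_m\oplus\mathfrak{b}$ on one block, and Schur's $\mu$-bound for $\mathfrak{c}_0$ on the complementary blocks --- and without them the case $k\ge 2$ remains unproved.
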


\begin{proof}
Let
$(\pi, V)$ a faithful representation of
$\h_m \oplus  \mathfrak{a}_n$, by Zassenhaus theorem we have
$$
V = V_1 \oplus \dots \oplus V_s,
$$
such that
$\pi(X)\mid_{V_i} = \lambda_i(X) I + N_i(X)$
for all
$X \in \g$.

Since
$(\pi, V)$ is a faithful representation there is
$v \in V$ such that
$\pi(Z)(v) \neq 0$.
It follows that there exist
$i_0= 1, \dots, s$, such that
$\pi(Z)(v_{i_0}) \neq 0$.

Since
$Z \in \mathfrak{z}(\h_m \oplus \mathfrak{a}_n)\cap [\h_m \oplus \mathfrak{a}_n, \h_m \oplus \mathfrak{a}_n]$, we have
$\lambda_{i_0}(Z) = 0$ and therefore
$N_{i_0}(Z)(v_{i_0}) \neq 0$.
By Proposition \ref{nilpotente} we have
$(N_{i_0}\mid_{\h_m}, V_{i_0})$ is a faithful nilrepresentation of
$\h_m$. Therefore, if
$\mathfrak{c} = \ker N_{i_0}$, it follows that
$\mathfrak{c} \cap \h_m = 0$ and thus
$\mathfrak{c}$ is an abelian Lie subalgebra of
$\h_m \oplus \mathfrak{a}_n$.
Let
$\mathfrak{b}$ be a complementary subspace of
$\h_m \oplus \mathfrak{c}$ in
$\h_m \oplus \mathfrak{a}_n$, thus
$\h_m \oplus \mathfrak{a}_n = \h_m \oplus \mathfrak{b} \oplus \mathfrak{c}$. It is clear that
$\h_m \oplus \mathfrak{b}$ is a Lie subalgebra of
$\h_m \oplus \mathfrak{a}_n$. By construction of
$\mathfrak{b}$, we have
$(N_{i_0}\mid_{\h_m \oplus \mathfrak{b}}, V_{i_0})$ is a faithful nilrepresentation of
$\h_m \oplus \mathfrak{b}$. We conclude, by Theorem \ref{TeoNil:h_m}, that
\begin{align*}
\dim V_{i_0} &\geq m + \left\lceil 2 \sqrt{ \dim \mathfrak{b} + 1} \right\rceil.
\end{align*}

Let
$\mathfrak{c}_0 = \ker \lambda_{i_0} \cap \mathfrak{c}$ then
$\dim \mathfrak{c}_0 = \dim \mathfrak{c} - 1$ and
$(\pi\mid_{ \mathfrak{c}_0}, \oplus_{i \neq i_0} V_i)$ is a faithful representation of
$ \mathfrak{c}_0$. By Theorem \ref{h_mAbelian}(\ref{abelian}), we have
\begin{align*}
\dim \oplus_{i \neq i_0} V_i &\geq \left\lceil 2 \sqrt{\dim \mathfrak{c}_0 - 1} \right\rceil
\end{align*}
Since
$\dim V =   \dim V_{i_0} + \dim \oplus_{i \neq i_0} V_i $, it follows that
\begin{align*}
\dim V & \geq m + \left\lceil 2 \sqrt{\dim \mathfrak{b} + 1} \right\rceil + \left\lceil 2 \sqrt{\dim \mathfrak{c}_0 - 1} \right\rceil \\
       & \geq m + \left\lceil 2 \sqrt{\dim \mathfrak{b} + \dim \mathfrak{c}_0 + 1} \right\rceil \\
       & \geq m + \left\lceil 2 \sqrt{ n } \right\rceil
\end{align*}
and this completes the proof.
\end{proof}

\end{section}

\section*{Acknowledgments}
The author sincerely thanks Professor Leandro Cagliero of Universidad Nacional de C\'ordoba, Argentina
for the helpful discussions concerning the material in this paper, for reading the draft and making a number of helpful suggestions.
The author also wishes to express his thanks to Gast\'on Garc\'ia for reading the draft and making helpful
suggestions.


\end{document}